\DeclareFontFamily{OT1}{rsfs}{}
\DeclareFontShape{OT1}{rsfs}{n}{it}{<-> rsfs10}{}
\DeclareMathAlphabet{\mathscr}{OT1}{rsfs}{n}{it}
\author{Dimitri Dias}
\title{A short proof of a known result about the density of a certain set in $[0,1]^n$}
\date{}
\newtheorem{theorem}{Theorem}
\newtheorem{lemma}{Lemma}
\newtheorem{corollary}{Corollary}
\newcounter{numrem}
\newenvironment{remark}[1][Remark \arabic{numrem} :]{\refstepcounter{numrem} \begin{trivlist}
\item[\hskip \labelsep {\bfseries #1}]}{\end{trivlist}}
\begin{document}
\maketitle

\abstract{In Theorem 1 of \cite{Cobel}, Cobeli and Zaharescu give a result about the distribution of the {${\bf F}_p$}-points on an affine curve. An easy corollary to their theorem is  that the set
\begin{equation*}
\bigcup_p \left\lbrace \left( \frac{x_1}{p}, \dots ,\frac{x_n}{p} \right), 1 \leq x_i < p \text{ and } \prod_{1 \leq i \leq n} x_i \equiv 1 \bmod{p} \right\rbrace
\end{equation*}
is dense in $\left[ 0,1 \right]^n$. In \cite{Foo}, Foo gives a elementary proof of that fact in dimension $2$. Following Foo's ideas, we give a similar proof in dimension greater than or equal to $3$.}

\section{Introduction}

In Theorem 1 of \cite{Cobel}, Cobeli and Zaharescu give a result about the distribution of the {${\bf F}_p$}-points on an affine curve. In dimension $n$, for any curve $\mathcal{C}$ over $\mathbb{F}_p$ not contained in any hyperplane, for any nice domain $\Omega$ in the torus $\mathbb{T}^n$ and for any prime $p$, let $\mu$ be the normalized Haar measure on $\mathbb{T}^n$ and $\mu_{n,p,\mathcal{C}}=\frac{1}{\left| \mathcal{C}(\mathbb{F}_p) \right|} \sum_{x \in \mathcal{C}(\mathbb{F}_p)} \delta_{t(x)}$, with $\delta_{t(x)}$ a unit point delta mass at $t(x)$, where $t$ is a natural injection from $\mathbb{F}_p^n$ to $\mathbb{T}^n$. Cobeli and Zaharescu quantify how fast $\mu_{n,p,\mathcal{C}}(\Omega)$ approaches $\mu(\Omega)$ and their result easily imply that the set
\begin{equation*}
\mathcal{A}_n = \bigcup_p \left\lbrace \left( \frac{x_1}{p}, \dots ,\frac{x_n}{p} \right), 1 \leq x_i < p \text{ and } \prod_{1 \leq i \leq n} x_i \equiv 1 \bmod{p} \right\rbrace
\end{equation*}
is dense in $\left[ 0,1 \right]^n$.
\\
\\
Their proof mainly uses exponential sums and, as remarked by Foo in \cite{Foo}, one can give an elementary proof of the previous fact in dimension $2$. Following his ideas, we prove the result in dimension greater than or equal to $3$.

\begin{theorem} \label{Thm1}
Let $n \geq 3$. The set $\mathcal{A}_n$ is dense in $\left[ 0,1 \right]^n$.
\end{theorem}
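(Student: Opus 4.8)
The plan is to unwind the density statement into an approximation problem and then exploit the extra coordinates that $n\ge 3$ supplies. Density of $\mathcal{A}_n$ means that for every target $(\alpha_1,\dots,\alpha_n)$ in the interior $(0,1)^n$ (which is dense, so it suffices to treat these) and every $\epsilon>0$ one must produce a prime $p$ and integers $1\le x_i<p$ with $\prod_i x_i\equiv 1\pmod p$ and $|x_i/p-\alpha_i|<\epsilon$ for all $i$. First I would fix a large prime $p$ and replace each requirement $|x_i/p-\alpha_i|<\epsilon$ by the window $x_i\in I_i:=(\,(\alpha_i-\tfrac{\epsilon}{2})p,\,(\alpha_i+\tfrac{\epsilon}{2})p\,)\cap\mathbb{Z}$, each containing $\asymp\epsilon p$ integers automatically coprime to $p$. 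Since $n\ge 3$, I would freeze the coordinates $x_4,\dots,x_n$ at arbitrary elements of their windows and set $c\equiv(x_4\cdots x_n)^{-1}\pmod p$; the task then reduces to the three-variable congruence $x_1x_2x_3\equiv c\pmod p$ with $x_i\in I_i$ for $i=1,2,3$. This reduction is the point of assuming $n\ge 3$: it isolates a self-contained three-coordinate core while still leaving $c$ slightly adjustable, since moving a frozen $x_j$ inside its window multiplies $c$ by a unit close to $1$.

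The mechanism I would use to realise the product-congruence is the same Bezout identity underlying Foo's two-dimensional argument. Setting $u=x_1\cdots x_{n-1}$, the relation $\prod_i x_i\equiv 1\pmod p$ is exactly the Bezout equation $u\,x_n-kp=1$, so $u$ is constrained only by $u\equiv x_n^{-1}\pmod p$; as $u$ runs over this arithmetic progression of common difference $p$ inside the size window $u\asymp(\prod_{i<n}\alpha_i)\,p^{\,n-1}$ forced by the targets, there are $\asymp\epsilon\,p^{\,n-2}$ admissible values. One then seeks a single such $u$ that \emph{factors} as $u=x_1\cdots x_{n-1}$ with each $x_i\in I_i$. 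This factorisation freedom is precisely what dimension $2$ lacks — there $u=x_1$ is rigid — and is what should make the elementary construction go through for $n\ge 3$: for $n=3$ one seeks an admissible $u\asymp\alpha_1\alpha_2 p^2$ of the form $u=x_1x_2$ with $x_1\in I_1$, $x_2\in I_2$, and for larger $n$ the frozen coordinates only enlarge the room to manoeuvre. Here I would again lean on continued fractions and the Euclidean algorithm to locate the factorisation, mirroring Foo's handling of the single inverse that appears.

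The step I expect to be the main obstacle is exactly this factorisation/covering heart: showing that, for $p$ large, the set of admissible products $\{x_1x_2:\ x_i\in I_i\}$ (of size a fixed positive power of $p$) actually meets the prescribed residue $x_3^{-1}c$ for some allowed $x_3\in I_3$. This is the genuinely analytic content, and the whole difficulty is to carry it out by the elementary continued-fraction apparatus rather than by an exponential-sum (Weil–Kloosterman) estimate. A secondary but real complication is keeping the modulus $p$ prime while all coordinates remain in their windows, since the approximation naturally selects a denominator of a prescribed size and class with no reason to be prime; I would reconcile this using Dirichlet's theorem on primes in arithmetic progressions, or an input on primes in short intervals, to pin down a suitable prime $p$ before applying the construction above. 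The remaining points — that the windows contain integers coprime to $p$, that the Bezout relation really yields $\prod_i x_i\equiv 1\pmod p$, and that the approximation errors stay below $\epsilon$ — are then bookkeeping, and the substance of the proof lies in the three-coordinate covering step and in the primality control just described.
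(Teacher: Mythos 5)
Your plan correctly identifies where the difficulty lies, but it does not resolve it: the step you yourself flag as ``the main obstacle'' --- showing that the product set $\{x_1x_2 : x_i \in I_i\}$ meets the residue class $x_3^{-1}c \bmod p$ for some admissible $x_3 \in I_3$ --- is precisely the statement that the points of $x_1x_2x_3 \equiv c \pmod p$ equidistribute in boxes, which is the content of the Cobeli--Zaharescu theorem itself and is normally proved with Kloosterman-type exponential sums. Announcing that you would ``lean on continued fractions and the Euclidean algorithm'' to produce a factorisation $u = x_1\cdots x_{n-1}$ with each factor in a prescribed window, for some $u$ in a prescribed residue class mod $p$, is not a proof: no mechanism is given, and none of the elementary tools you name is known to yield such a factorisation. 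As written, the proposal reduces the problem to a subproblem that is at least as hard as the original, and the extra coordinates from $n\ge 3$ are not actually exploited beyond the (unjustified) hope that they ``enlarge the room to manoeuvre.''

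The structural choice that creates this dead end is fixing the prime $p$ first. The paper reverses the order: it first approximates $(\alpha_1,\dots,\alpha_n)$ by a point of the special telescoping form $\left(\frac{a_0}{a_1},\frac{a_1}{a_2},\dots,\frac{a_{n-1}}{a_n}\right)$ with $(a_i,a_{i+1})=1$ and $(a_1,a_2\cdots a_n)=1$ (Lemma \ref{lemm}, whose only nontrivial input is Erd\H{o}s's bound $g(b)\le c_1\omega(b)^{c_2}$ on gaps between integers coprime to $b$), and only then chooses $p$ by Dirichlet's theorem in the classes $p\equiv -a_0^{-1}a_n \bmod{a_1}$ and $p\equiv -1 \bmod{a_2\cdots a_n}$, so that the explicit point $\left(\frac{a_0p+a_n}{pa_1},\frac{a_1(p+1)}{pa_2},\dots,\frac{a_{n-1}(p+1)}{pa_n}\right)$ lies in $\mathcal{A}_n$: the numerators are integers by the choice of congruence classes, the product of the coordinates' numerators telescopes, and the congruence $\prod x_i\equiv 1 \bmod p$ holds identically, with no counting argument modulo $p$ required. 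Your secondary worry about keeping $p$ prime is an artifact of the same ordering and disappears in that setup. To salvage your approach you would need either to supply the elementary covering argument for $x_1x_2x_3\equiv c$ in boxes --- which would be a genuinely new result --- or to restructure the construction along these telescoping lines.
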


\subsection*{Acknowledgements}
I would like to thank my supervisor, Professor Andrew Granville, for discussions that greatly helped me with my work. I would also like to thank my friends Marc Munsch, Oleksiy Klurman, Crystel Bujold and Marzieh Mehdizadeh for their helpful comments.

\newpage

\section{Proof of Theorem \ref{Thm1}}

We will need the following lemmas:

\begin{lemma} \label{lem1}
Let $x \in \left[ 0,1 \right]$ and $0 < \varepsilon \leq 1$. Let $N$ such that $N > \frac{1}{\varepsilon}$ and $c_3 \frac{(\log N)^{c_2}}{N} < \frac{\varepsilon}{2}$ (where $c_2$ and $c_3$ are absolute constants defined in the proof). Then, for every $b \geq N$, there exists $1 \leq a < b$ with $(a,b)=1$, $a > \frac{\varepsilon}{2} b$ and $\left| x- \frac{a}{b} \right| < \varepsilon$.
\end{lemma}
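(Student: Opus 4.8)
The plan is to prove Lemma \ref{lem1}, which asserts that every real $x \in [0,1]$ can be approximated by a fraction $a/b$ with a \emph{prescribed} denominator $b$ (any $b \geq N$), subject to the constraints that the fraction is in lowest terms, that $a$ is not too small (namely $a > \frac{\varepsilon}{2} b$), and that $|x - a/b| < \varepsilon$. Let me sketch how I would proceed.

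First I would observe that if we drop the coprimality and lower-bound conditions, the approximation part is essentially trivial: for any fixed $b$, the multiples $\frac{0}{b}, \frac{1}{b}, \dots, \frac{b}{b}$ partition $[0,1]$ into intervals of length $\frac{1}{b}$, so there is some integer $a$ with $|x - a/b| \leq \frac{1}{2b} \leq \frac{1}{2N} < \frac{\varepsilon}{2} < \varepsilon$, using $N > 1/\varepsilon$. The real work is upgrading this naive choice to one where $\gcd(a,b)=1$. My strategy would be to choose $a$ not as the single closest integer, but to allow $a$ to range over the roughly $\varepsilon b$ integers in the window $(x - \varepsilon) b < a < (x + \varepsilon) b$ (intersected with $[1, b-1]$), and to show that at least one of these candidates is coprime to $b$ while also lying above $\frac{\varepsilon}{2} b$. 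Since $x \in [0,1]$, for $x$ bounded away from $0$ the window already forces $a \gtrsim \varepsilon b$; the delicate case is when $x$ is near $0$, where I would shift attention to the subinterval of admissible $a$ lying in, say, $\left( \frac{\varepsilon}{2} b, \varepsilon b \right)$, whose length is still of order $\varepsilon b$.

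The crux, and the step I expect to be the main obstacle, is the coprimality count: I must guarantee that among the integers $a$ in an interval of length $\sim \varepsilon b$ there is at least one with $(a,b)=1$. This is exactly a question about the largest gap between consecutive integers coprime to $b$ (equivalently, the maximal gap in the reduced residues mod $b$, a \emph{Jacobsthal function} type quantity). The known bound — which is presumably the source of the absolute constants $c_2, c_3$ advertised in the statement — is that the Jacobsthal function $g(b)$ satisfies $g(b) \ll (\log b)^{c_2}$, so any interval of length exceeding $c_3 (\log b)^{c_2}$ contains an integer coprime to $b$. I would invoke this bound (citing Iwaniec's result that the Jacobsthal function grows at most polylogarithmically) to conclude that since our window has length $\sim \frac{\varepsilon}{2} b > \frac{\varepsilon}{2} N > c_3 (\log N)^{c_2} \geq c_3 (\log b)^{c_2}$ — here using both hypotheses $N > 1/\varepsilon$ and $c_3 \frac{(\log N)^{c_2}}{N} < \frac{\varepsilon}{2}$ together with monotonicity to handle all $b \geq N$ — it must contain an $a$ coprime to $b$.

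Finally I would assemble the pieces: choose the window of admissible numerators to be $\left( \max\left(\frac{\varepsilon}{2} b, (x-\varepsilon)b\right), (x+\varepsilon)b \right) \cap [1, b-1]$, verify this interval is nonempty and of length at least $\frac{\varepsilon}{2} b$ regardless of where $x$ sits in $[0,1]$, apply the Jacobsthal gap bound to extract a coprime $a$ inside it, and read off that this $a$ simultaneously satisfies $(a,b)=1$, $a > \frac{\varepsilon}{2} b$, and $|x - a/b| < \varepsilon$. The one bookkeeping subtlety to watch is the endpoint behavior near $x=0$ and $x=1$ (ensuring $a \geq 1$ and $a \leq b-1$ so that $a/b$ is a genuine reduced fraction in the required range), but this is routine once the interval length is controlled.
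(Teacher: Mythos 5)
Your proposal is correct and follows essentially the same route as the paper: both arguments reduce the problem to the Jacobsthal-function bound $g(b) \ll (\log b)^{c_2}$ (the paper cites Erd\H{o}s's remark via Brun's method rather than Iwaniec, a purely cosmetic difference) to guarantee an integer coprime to $b$ in a window of numerators of length $\sim \varepsilon b$ lying above $\frac{\varepsilon}{2}b$. The paper phrases this as bounding the gaps between consecutive elements of $\left\lbrace \frac{a}{b} : (a,b)=1 \right\rbrace$ by $\frac{g(b)+1}{b} < \frac{\varepsilon}{2}$, which is the same computation.
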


\begin{proof}
Let $b \geq N$ and consider the set
\begin{equation*}
\mathcal{D}_b = \left\lbrace \frac{a}{b} \text{ with } 1 \leq a < b \text{ and } (a,b)=1 \right\rbrace \, .
\end{equation*}
Let $g(b)$ be the least integer such that every set of $g(b)$ consecutive integers contains at least one integer relatively prime to $b$. As remarked by Erd\H{o}s in \cite{Erd}, a standard application of Brun's method gives that
\begin{equation*}
g(b) \leq c_1 \omega(b)^{c_2}
\end{equation*}
where $\omega(b)$ denotes the number of prime factors of $b$ and $c_1$, $c_2$ are absolute constants. Therefore, the distance between two consecutive elements of $\mathcal{D}_b$ is bounded above by
\begin{equation*}
\frac{g(b)+1}{b} \leq \frac{c_1 \omega(b)^{c_2} +1}{b} \leq c_3 \frac{(\log b)^{c_2}}{b} < \frac{\varepsilon}{2} \, .
\end{equation*}
The minimal value of this set is $\frac{1}{b} < \varepsilon$ and the maximal value is $1-\frac{1}{b} > 1 - \varepsilon$. Therefore, for any $x \in \left[ 0,1 \right]$, there exists $\frac{a}{b} \in \mathcal{D}_b$ with $\frac{a}{b} > \frac{\varepsilon}{2}$ and $\left| x- \frac{a}{b} \right| < \varepsilon$.
\end{proof}

\begin{lemma} \label{lemm}
The set
\begin{equation*}
\mathcal{B}_n = \left\lbrace \left( \frac{a_0}{a_1}, \dots ,\frac{a_{n-1}}{a_n} \right), 1 \leq a_i < a_{i+1}, (a_i, a_{i+1})=1 \text{ and } (a_1, a_2 \dots a_n)=1 \right\rbrace
\end{equation*}
is dense in $\left[ 0,1 \right]^n$.
\end{lemma}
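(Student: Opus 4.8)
The plan is to show that $\mathcal{B}_n$ is dense by constructing, for any target point $(x_1,\dots,x_n) \in [0,1]^n$ and any prescribed accuracy $\varepsilon > 0$, a chain $1 \leq a_0 < a_1 < \cdots < a_n$ satisfying the required coprimality conditions and with $\left| x_i - \frac{a_{i-1}}{a_i} \right|$ small for each $i$. The natural mechanism is to build this chain \emph{from the top down}, choosing $a_n$ first to be large, then successively selecting $a_{n-1}, a_{n-2}, \dots, a_0$ so that each ratio $\frac{a_{i-1}}{a_i}$ approximates $x_i$. Lemma \ref{lem1} is precisely the engine that makes each such step work: given a sufficiently large denominator $b$, it produces a numerator $a$ coprime to $b$, bounded below by $\frac{\varepsilon}{2} b$, with $\left| x - \frac{a}{b} \right| < \varepsilon$.

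First I would fix a target and accuracy, and choose $N$ as in Lemma \ref{lem1} (also large enough that the tail errors accumulated over $n$ steps stay controlled). I would set $a_n$ to be some integer $\geq N$ and then apply Lemma \ref{lem1} with $x = x_n$ and $b = a_n$ to obtain $a_{n-1}$ with $(a_{n-1}, a_n) = 1$, $a_{n-1} > \frac{\varepsilon}{2} a_n$, and $\left| x_n - \frac{a_{n-1}}{a_n} \right| < \varepsilon$. The crucial point is the lower bound $a_{n-1} > \frac{\varepsilon}{2} a_n$: it guarantees that the \emph{new} denominator $a_{n-1}$ is itself still large (growing like a fixed fraction of $a_n$), so that it exceeds $N$ and Lemma \ref{lem1} can be applied again at the next stage with $x = x_{n-1}$ and $b = a_{n-1}$. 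Iterating downward through $i = n, n-1, \dots, 1$ produces the full chain, with consecutive coprimality $(a_{i-1}, a_i) = 1$ built in at every step.

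Two bookkeeping obstacles remain, and I expect the second to be the main one. The first is purely quantitative: since each step shrinks the denominator by a factor of about $\frac{\varepsilon}{2}$, after $n-1$ descents the smallest denominator is roughly $\left( \frac{\varepsilon}{2} \right)^{n-1} a_n$, so I must start with $a_n$ large enough that even this smallest denominator exceeds $N$; choosing $a_n > N \left( \frac{2}{\varepsilon} \right)^{n-1}$ (or simply taking $a_n$ as large as needed) handles this, and it is where the hypothesis $b \geq N$ of Lemma \ref{lem1} gets consumed repeatedly. The genuinely delicate requirement is the \emph{global} coprimality condition $(a_1, a_2 \cdots a_n) = 1$, which is stronger than the pairwise-consecutive coprimality that the iteration supplies automatically. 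The top-down construction only guarantees $(a_{i-1}, a_i) = 1$ for each $i$; it says nothing about, say, whether $a_1$ shares a factor with $a_3$.

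The hard part will therefore be arranging the global coprimality $(a_1, a_2 \cdots a_n) = 1$ without disturbing the approximations already secured. The cleanest route is to enforce this at the very last step: once $a_n, a_{n-1}, \dots, a_2$ have been fixed, the final application of Lemma \ref{lem1} chooses $a_1$ (approximating $x_2$ via $\frac{a_1}{a_2}$) and then $a_0$ (approximating $x_1$ via $\frac{a_0}{a_1}$). Since Lemma \ref{lem1} in fact furnishes not just one but an entire dense subset $\mathcal{D}_b$ of admissible numerators within the target window, I have freedom to select $a_1$ coprime to the whole product $a_2 a_3 \cdots a_n$ rather than merely to $a_2$. Concretely, I would replace the plain application of Lemma \ref{lem1} at the $a_1$-stage by a version where the ``forbidden'' modulus is $a_2 a_3 \cdots a_n$ in place of $a_2$: the same Brun-sieve spacing bound $g(b) \leq c_1 \omega(b)^{c_2}$ applies to this larger modulus, so consecutive elements of the corresponding $\mathcal{D}$ are still closely spaced, and one can land inside the $\varepsilon$-window while being coprime to the entire product. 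This simultaneously yields $(a_1, a_2 \cdots a_n) = 1$ and keeps $\left| x_2 - \frac{a_1}{a_2} \right| < \varepsilon$, completing the construction and proving density.
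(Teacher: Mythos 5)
Your proposal is correct and follows essentially the same strategy as the paper: iterate Lemma \ref{lem1} down the chain, using the lower bound $a_i > \frac{\varepsilon}{2} a_{i+1}$ to keep each successive denominator large, and secure the global condition $(a_1, a_2 \cdots a_n)=1$ at the $a_1$-step by rerunning the Brun-sieve spacing argument with modulus $a_2 \cdots a_n$ --- exactly the paper's modified set $\mathcal{D}_{a_2}$ and its Equation \ref{eq1}. The only cosmetic difference is at the top of the chain: the paper fixes $a_{n-1}$ to be a large prime and then selects $a_n$ from the set $\mathcal{H}_{a_{n-1}}$, whereas you choose $a_n$ first and apply Lemma \ref{lem1} directly; both yield the crucial control $a_n < \frac{2}{\varepsilon} a_{n-1}$ that makes the final spacing estimate go through.
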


\begin{proof}
Let $(x_1, \dots, x_n) \in \left[ 0,1 \right]^n$ and $\varepsilon > 0$. We can also assume $\varepsilon \leq 1$. Let $M$ such that ${c_3 \frac{\log^{c_2} M}{M} < \left( \frac{\varepsilon}{2} \right)^{n-1} \frac{1}{n^{c_2}}}$ and $M > \frac{4}{\varepsilon}$ (note in particular that $M$ can be supposed larger than the $N$ in Lemma \ref{lem1}).
\\
\\
Choose $a_{n-1}$ a prime number such that $a_{n-1} \geq \left( \frac{2}{\varepsilon} \right)^{n-2}M$. Let
\begin{equation*}
\mathcal{H}_{a_{n-1}} = \left\lbrace \frac{a_{n-1}}{m} \text{ with } a_{n-1} < m \text{ and } (a_{n-1},m)=1 \right\rbrace \, .
\end{equation*}
Since $a_{n-1}$ is prime, if $(a_{n-1},m)=1$, then either $(a_{n-1},m+1)=1$ or $(a_{n-1},m+2)=1$. Therefore, the distance between two consecutive elements in the set $\mathcal{H}_{a_{n-1}}$ is bounded above by
\begin{equation*}
\frac{2}{a_{n-1}} < \frac{\varepsilon}{2} \, .
\end{equation*}
The set $\mathcal{H}_{a_{n-1}}$ contains arbitrarily small elements and has maximum $1-\frac{1}{a_{n-1}} > 1 - \varepsilon$. Therefore, there exists $a_n$ such that $(a_{n-1},a_n)=1$, $\frac{a_{n-1}}{a_n} > \frac{\varepsilon}{2}$ and $\left| x_n - \frac{a_{n-1}}{a_n} \right| < \varepsilon$ (we will need $a_n$ not too large in terms of $a_{n-1}$ in Equation \ref{eq1}).
\\
\\
Using Lemma \ref{lem1} $(n-3)$ times, we find $a_{n-2}, \dots , a_2$ with $(a_i,a_{i+1})=1$, $a_i > \frac{\varepsilon}{2} a_{i+1}$ and $\left| x_i - \frac{a_{i-1}}{a_i} \right| < \varepsilon$ (the choice of $a_{n-1}$ large enough in terms of $M$ allows us to apply Lemma \ref{lem1} at each step).
\\
\\
To find $a_1$, we need a slightly modified version of Lemma \ref{lem1}. Let
\begin{equation*}
\mathcal{D}_{a_2} = \left\lbrace \frac{m}{a_2} \text{ with } 1 \leq m < a_2 \text{ and } (m,a_2 \dots a_n)=1 \right\rbrace \, .
\end{equation*}
The difference between two consecutive elements in this set is bounded by
\begin{equation} \label{eq1}
\frac{g(a_2 \dots a_n)+1}{a_2} \leq c_3 \frac{\log^{c_2} (a_2 \dots a_n)}{a_2} \leq c_3 \left( \frac{2}{\varepsilon} \right)^{n-3} \frac{\log^{c_2} \left( \left( \frac{2}{\varepsilon} \right) a_{n-1}^{n-1} \right)}{a_{n-1}} < \frac{\varepsilon}{2}
\end{equation}
from our choice of $M$. The minimal value of this set is $\frac{1}{a_2} < \varepsilon$, and if we derestrict $m$ and let it go to infinity, we cover all of $\left[0,+\infty \right)$ with intervals of length at most $\frac{\varepsilon}{2}$. Therefore, one can always find $\frac{a_1}{a_2} \in \mathcal{D}_{a_2}$ such that $\left| x_2 - \frac{a_1}{a_2} \right| < \varepsilon$ and $\frac{a_1}{a_2}> \frac{\varepsilon}{2}$.
\\
\\
To find $a_0$, one can simply apply Lemma \ref{lem1} again.
\end{proof}

For the sake of completeness, we re-prove the following lemma of \cite{Foo}, which suffices to prove Theorem \ref{Thm1}.
\begin{lemma} \label{lemm2}
The set $\mathcal{A}_n$ is dense in the set $\mathcal{B}_n$.
\end{lemma}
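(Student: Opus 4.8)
The plan is to approximate a fixed point $\left(\frac{a_0}{a_1},\dots,\frac{a_{n-1}}{a_n}\right)\in\mathcal{B}_n$, with $0<\varepsilon\leq 1$ also fixed. I would look for a prime $p$ and integers $x_1,\dots,x_n$ with $1\leq x_i<p$, $\prod_i x_i\equiv 1\pmod p$ and $\left|\frac{x_i}{p}-\frac{a_{i-1}}{a_i}\right|<\varepsilon$. The point to keep in mind is that approximating the ratios is essentially free: for $p>\frac{1}{\varepsilon}$ any integer within $\varepsilon p$ of $\frac{a_{i-1}}{a_i}p$ automatically satisfies the third condition, so the whole difficulty is to meet the multiplicative constraint $\prod_i x_i\equiv 1\pmod p$ while staying inside these windows.

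First I would exhibit a clean family of candidates. Using Dirichlet's theorem take $p$ to be a large prime, and for $1\leq i\leq n$ write $x_i=\frac{a_{i-1}}{a_i}(p-s_i)$, where $s_i$ is a positive integer with $s_i\equiv p\pmod{a_i}$ and $1\leq s_i<\varepsilon p$. The congruence $s_i\equiv p\pmod{a_i}$ together with $(a_{i-1},a_i)=1$ guarantees that each $x_i$ is an integer, and the bounds on $s_i$ give $1\leq x_i<p$ and $\left|\frac{x_i}{p}-\frac{a_{i-1}}{a_i}\right|=\frac{a_{i-1}}{a_i}\frac{s_i}{p}<\varepsilon$ for $p$ large. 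The telescoping of the ratios then yields
\begin{equation*}
\prod_{i=1}^n x_i=\frac{a_0}{a_n}\prod_{i=1}^n (p-s_i)\equiv(-1)^n\frac{a_0}{a_n}\prod_{i=1}^n s_i\pmod p,
\end{equation*}
so, since $p$ is large enough that $a_0$ and $a_n$ are units modulo $p$, the product condition collapses to the single congruence $\prod_{i=1}^n s_i\equiv(-1)^n a_n a_0^{-1}\pmod p$, to be solved with each $s_i$ confined to the progression $s_i\equiv p\pmod{a_i}$ inside $[1,\varepsilon p)$. Here the coprimality hypotheses built into $\mathcal{B}_n$ are exactly what make the $x_i$ integral and keep these progressions nontrivial.

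It is at this point that the hypothesis $n\geq 3$ enters, and the remaining step is the main obstacle. Each $s_i$ still ranges over about $\frac{\varepsilon p}{a_i}$ admissible values, so I would fix $s_3,\dots,s_n$ at any admissible choices, reducing the problem to finding $s_1,s_2$ in two prescribed arithmetic progressions, each inside an interval of length $\asymp\varepsilon p$, with $s_1 s_2\equiv\mu\pmod p$ for the resulting unit $\mu$. This two-variable congruence---placing the modular hyperbola inside a product of short intervals---is exactly the situation treated by Foo in dimension $2$, and I would resolve it by his elementary argument through the Euclidean algorithm and continued fractions, after using Dirichlet's theorem once more to pin $p$ down in a residue class for which a solution in the required ranges is guaranteed (the condition $(a_1,a_2\cdots a_n)=1$ ensuring the two progressions are compatible). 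Feeding such $s_1,s_2$ back into the formula for the $x_i$ produces a point of $\mathcal{A}_n$ within $\varepsilon$ of the chosen point of $\mathcal{B}_n$. I expect the genuinely delicate part to be this last two-variable step: the reduction above is bookkeeping, but controlling a modular product inside short intervals without exponential sums is where all the arithmetic content is concentrated.
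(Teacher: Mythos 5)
Your setup is fine up to the reduction to $\prod_i s_i \equiv (-1)^n a_n a_0^{-1} \pmod{p}$, but the last step is a genuine gap, and it is where the entire content of the lemma lives. After fixing $s_3,\dots,s_n$, you need $s_1 s_2 \equiv \mu \pmod{p}$ with each $s_i$ lying in a prescribed residue class modulo $a_i$ and confined to an interval of length about $\varepsilon p$. That is the problem of producing a point of a modular hyperbola inside a small box (further intersected with congruence conditions), and no elementary argument of Foo's solves it: statements of that kind are exactly what one normally proves with Kloosterman-sum (Weil) bounds, i.e.\ the machinery this paper is trying to avoid. Foo's elementary proof in dimension $2$ does not show that the hyperbola $xy \equiv \mu \pmod{p}$ meets every box of side $\varepsilon p$; like the present paper, it first exhibits a dense set of rational points of a special shape and then writes down an explicit sequence converging to each of them. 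Saying you would ``pin $p$ down in a residue class for which a solution in the required ranges is guaranteed'' is precisely the assertion that needs proof, and you supply no modulus, no residue class, and no reason such a class exists. (Two smaller points: $n \geq 3$ plays no role in this lemma --- it is needed in Lemma \ref{lemm}, not here --- and the hypothesis $(a_1, a_2\cdots a_n)=1$ is not about compatibility of the progressions for $s_1$ and $s_2$, which are independent variables; it is what makes the congruence conditions imposed on $p$ itself compatible.)

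The way to close the gap is to leave no $s_i$ free: choose the residues of the $x_i$ modulo $p$ so that their product telescopes to $1$ identically and each residue has a natural representative near its target. Concretely, take $p \equiv -1 \pmod{a_2\cdots a_n}$ and $p \equiv -a_0^{-1}a_n \pmod{a_1}$ (compatible by the Chinese remainder theorem since $(a_1,a_2\cdots a_n)=1$, and realized by infinitely many primes by Dirichlet), and set $x_1=\frac{a_0p+a_n}{a_1}$ and $x_i=\frac{a_{i-1}(p+1)}{a_i}$ for $2 \leq i \leq n$. These are integers in $[1,p)$ for $p$ large, each $\frac{x_i}{p}$ converges to $\frac{a_{i-1}}{a_i}$, and
\begin{equation*}
\prod_{1 \leq i \leq n} x_i = \frac{(a_0p+a_n)(p+1)^{n-1}}{a_n} \equiv \frac{a_n}{a_n} \equiv 1 \pmod{p} \, .
\end{equation*}
This is the paper's proof; in your parametrization it amounts to taking all the $s_i$ to be specific bounded integers rather than free variables ranging over short intervals, which eliminates the hard equidistribution step entirely.
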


\begin{proof}
Let $\left( \frac{a_0}{a_1}, \dots ,\frac{a_{n-1}}{a_n} \right) \in \mathcal{B}_n$. Consider the sequence (which exists, as a consequence of Dirichlet's theorem)
\begin{equation*}
\left( \frac{a_0 p + a_n}{pa_1}, \dots ,\frac{a_{n-1}(p+1)}{pa_n} \right)_p \text{ with } p \equiv -a_0^{-1} a_n \bmod{a_1} \text{ and } p \equiv -1 \bmod{a_2 \dots a_n} \, .
\end{equation*}
Then, this sequence is in $\mathcal{A}$ and converges to $\left( \frac{a_0}{a_1}, \dots ,\frac{a_{n-1}}{a_n} \right)$.
\end{proof}

\begin{proof}[Proof of Theorem \ref{Thm1}]
It's a straightforward consequence of Lemma \ref{lemm} and Lemma \ref{lemm2}.
\end{proof}

\begin{remark}
Using Theorem \ref{Thm1}, one can easily prove the following slightly more general result:
\begin{corollary}
Let $f$ be a monic polynomial of degree $d$. Then, the set
\begin{equation*}
\bigcup_p \left\lbrace \left( \frac{f(x_1)}{p^d}, \dots ,\frac{f(x_n)}{p^d} \right), 1 \leq x_i < p \text{ and } \prod_{1 \leq i \leq n} x_i \equiv 1 \bmod{p} \right\rbrace
\end{equation*}
is dense in $\left[ 0,1 \right]^n$.
\end{corollary}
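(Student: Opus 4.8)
The plan is to reduce the corollary to Theorem \ref{Thm1} by observing that, for $1 \le x < p$, the coordinate $\frac{f(x)}{p^d}$ differs from $\left(\frac{x}{p}\right)^d$ by a quantity that tends to $0$ as $p \to \infty$, uniformly in $x$. Indeed, writing $f(x) = x^d + c_{d-1}x^{d-1} + \dots + c_0$, for $1 \le x < p$ one has $|f(x) - x^d| \le \left(\sum_{0 \le j < d}|c_j|\right) p^{d-1}$, so that $\left|\frac{f(x)}{p^d} - \left(\frac{x}{p}\right)^d\right| \le \frac{C_f}{p}$ with $C_f = \sum_{0 \le j < d}|c_j|$ a constant depending only on $f$. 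Thus, up to an error of order $1/p$, the corollary's set is the image of $\mathcal{A}_n$ under the coordinatewise map $\Phi(t) = t^d$, which is a homeomorphism of $[0,1]$ fixing $0$ and $1$; this is what makes the reduction work.

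Concretely, I would fix a target point $(y_1, \dots, y_n) \in [0,1]^n$ and $\varepsilon \in (0,1]$, and set $z_i = y_i^{1/d} \in [0,1]$, so that $z_i^d = y_i$. Since $\Phi$ is Lipschitz on $[0,1]$, with $|s^d - t^d| \le d|s-t|$ by the mean value theorem, it suffices to produce, for $p$ large, a point $\left(\frac{x_1}{p}, \dots, \frac{x_n}{p}\right) \in \mathcal{A}_n$ with $\left|\frac{x_i}{p} - z_i\right| < \frac{\varepsilon}{2d}$ for every $i$. For such a point the triangle inequality gives
\[
\left|\frac{f(x_i)}{p^d} - y_i\right| \le \left|\frac{f(x_i)}{p^d} - \left(\frac{x_i}{p}\right)^d\right| + \left|\left(\frac{x_i}{p}\right)^d - z_i^d\right| \le \frac{C_f}{p} + d \cdot \frac{\varepsilon}{2d},
\]
so if moreover $p > 2C_f/\varepsilon$, each coordinate lies within $\varepsilon$ of $y_i$, which is exactly what density requires.

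The one point that needs care — and the only real obstacle — is that the estimate above is useful only for large $p$, whereas the density of $\mathcal{A}_n$ a priori only guarantees some approximating point, possibly coming from a small prime. I would resolve this by noting that for each fixed prime $p$ the set $\left\{\left(\frac{x_1}{p}, \dots, \frac{x_n}{p}\right)\right\}$ is finite, so the points of $\mathcal{A}_n$ with $p \le P_0$ form a finite set; since $[0,1]^n$ has no isolated points, removing a finite set from a dense set leaves it dense, and hence the points of $\mathcal{A}_n$ with $p > P_0$ remain dense for every $P_0$. Applying this with $P_0 = \lceil 2C_f/\varepsilon \rceil$ furnishes the required approximating point with $p$ large enough, and completes the reduction. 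One should also observe that although the constructed points $\frac{f(x_i)}{p^d}$ need not all lie in $[0,1]$, this is harmless: we have shown that every point of $[0,1]^n$ lies in the closure of the set, which is precisely the assertion to be proved.
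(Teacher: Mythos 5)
Your proposal is correct and follows essentially the same route as the paper: take $d$-th roots of the target coordinates, apply Theorem \ref{Thm1} with a prime $p$ large compared to the coefficients of $f$, and split the error via the triangle inequality into the term $\left|\frac{f(x)}{p^d}-\frac{x^d}{p^d}\right|=O(1/p)$ and the term coming from approximating $\alpha_i^{1/d}$ (you use the Lipschitz bound $|s^d-t^d|\le d|s-t|$ where the paper expands binomially, a purely cosmetic difference). If anything, your write-up is slightly more careful than the paper's, since you explicitly justify why one may insist on $p$ being large --- by discarding the finitely many points of $\mathcal{A}_n$ coming from small primes --- whereas the paper simply asserts the existence of such a $p$ without comment.
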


\begin{proof}
Let $\Vert f \Vert$ be the absolute value of the largest coefficient of $f$. Let $(\alpha_1, \dots, \alpha_n) \in \left[ 0,1 \right]^n$ and $\varepsilon > 0$. We can assume that $\varepsilon \leq 1$. Using Theorem \ref{Thm1}, there exist $p > \frac{2d \Vert f \Vert}{\varepsilon}$ and $1 \leq x_i < p$ such that
\begin{equation*}
\left| \frac{x_i}{p} - \alpha_i^{\frac{1}{d}} \right| < \frac{\varepsilon}{2^{d+1}} \quad \forall 1 \leq i \leq n \text{ and } \prod_{1 \leq i \leq n} x_i \equiv 1 \bmod{p} \, .
\end{equation*}
Therefore,
\begin{equation*}
\left| \frac{x_i^d}{p^d} - \alpha_i \right| < \sum_{k=0}^{d-1} \binom{d}{k} \left( \frac{\varepsilon}{2^{d+1}} \right)^{d-k} < \frac{\varepsilon}{2} \quad \forall 1 \leq i \leq n
\end{equation*}
and
\begin{equation*}
\left| \frac{f(x_i)}{p^d} - \frac{x_i^d}{p^d} \right| \leq \frac{\Vert f \Vert}{p^d} \sum_{k=0}^{d-1} \left| x_i \right|^k \leq \frac{d \Vert f \Vert}{p} < \frac{\varepsilon}{2}  \quad \forall 1 \leq i \leq n \, .
\end{equation*}
This suffices to prove the result.
\end{proof}
\end{remark}

\begin{remark}
The theorem of \cite{Cobel} implies that a statement similar to Theorem \ref{Thm1} is true for a whole family of curves. It would be interesting to see if the previous elementary proof can be extended to curves other that $\prod_{1 \leq i \leq n} x_i \equiv 1 \bmod{p}$.
\end{remark}
\bigskip

\bibliographystyle{alpha}
\nocite{*}
\bibliography{densityresult}

\textsc{\footnotesize D\'{e}partement de math\'{e}matiques et statistiques,
Universit\'{e} de Montr\'{e}al,
CP 6128 succ. Centre-Ville,
Montr\'{e}al QC H3C 3J7, Canada
}

\textit{\small Email address: }\texttt{\small dimitrid@dms.umontreal.ca}

\end{document}